\documentclass{amsart}
\usepackage{amsmath}
\usepackage{framed,comment,enumerate}
\usepackage[pdftex]{graphicx}
\usepackage{epstopdf}
\usepackage{xcolor, framed}
\usepackage{color}



\def\R {\mathbb{R}}
\def\N{\mathbb{N}}
\def\eps{\varepsilon}
\def\PB{\partial B_1}

\def\PosS{\{u>0\}}
\def\ConS{\{u=0\}}
\def\PPosS{\partial\{u>0\}}

\def\Diri{|\nabla u|^2}

\def\EGa{\mathcal{E}_\gamma}
\def\KGa{\mathcal{K}(\gamma)}

\def\K{\mathcal{K}}
\def\HS{\mathcal{HS}}
\def\P{\mathcal{P}}
\def\SP{\mathcal{SP}}

\def\hem{\hspace{0.5em}}
\def\vem{\vspace{0.6em}}


\newtheorem{thm}{Theorem}[section]
\newtheorem{prop}[thm]{Proposition}

\newtheorem{cor}[thm]{Corollary}
\newtheorem{lem}[thm]{Lemma}
\theoremstyle{definition}
\newtheorem{defi}[thm]{Definition}
\numberwithin{equation}{section}

\theoremstyle{remark}
\newtheorem{rem}[thm]{Remark}

\title{Concentration of  cones in the Alt-Phillips problem} 

\author{Ovidiu Savin}
\address{Department of Mathematics,	Columbia University, New York, USA}
\email{savin@math.columbia.edu}

\author{Hui Yu}
\address{Department of Mathematics,	National University of Singapore, Singapore}
\email{huiyu@nus.edu.sg}

\begin{document}

\begin{abstract}
We study minimizing cones in the Alt-Phillips problem when the exponent $\gamma$ is close to $1$.

When $\gamma$ converges to $1$, we show that the cones concentrate around \textit{symmetric} solutions to the classical obstacle problem. To be precise, the limiting profiles are radial in a subspace and invariant in directions perpendicular to that subspace.

\end{abstract}
\maketitle

\section{Introduction}

For an exponent $\gamma\in[0,2)$, the \textit{Alt-Phillips problem} studies minimizers of the following energy
\begin{equation}
\label{EqnAP}
\mathcal{E}_\gamma(u;\Omega):=\int_{\Omega}\frac{\Diri}{2}+u^\gamma\chi_{\{u>0\}},
\end{equation} 
where $\Omega$ is a domain in $\R^{d}$, and $u$ is a \textit{non-negative} function. 

Since its introduction by Phillips \cite{P} and Alt-Phillips \cite{AP}, this problem has received intense attention in the past few decades. See, for instance, \cite{AS, B, DS, FeRo, FeY, FeY2,KSP, WY,WaY}. For a minimizer $u$, the focus is on regularity properties of its \textit{free boundary} $\PPosS$, separating the \textit{positive set} $\PosS$ from the \textit{contact set} $\ConS$. Thanks to Weiss \cite{W}, the infinitesimal behavior of the free boundary is given by homogeneous minimizers, known as \textit{minimizing cones}. 

The simplest  cone is the \textit{flat cone}, the one-dimensional minimizer of \eqref{EqnAP} that vanishes on a half space (see \eqref{EqnFlatConeAP}). If a minimizer $u$ is close to the flat cone, then its free boundary $\PPosS$ is smooth \cite{AP, D, DS, ReRo}. For certain ranges of $d$ and $\gamma$, the only minimizing cone is the flat cone \cite{AP, CJK, JS}. 

In general, there are non-flat minimizing cones for \eqref{EqnAP}, known as \textit{singular cones} \cite{DJ, SY2}. They provide singularity models for this problem. Unfortunately, very little is known about these cones. As a result, we face  serious challenges in the study of singular  free boundary points for general $\gamma$. 

\vem

The exception is when $\gamma=1$, corresponding to the \textit{classical obstacle problem}. For this problem, homogeneous solutions are fully classified \cite{C, PSU}. This leads to  a lot of developments on the singular part of the free boundary \cite{C, CSV, FRoSe, FSe, L, M, SY1}.  

To be precise, cones in the classical obstacle problem belong to two families (see Proposition \ref{PropObPCones}). One family consists of flat cones, known as \textit{half-space solutions} in this problem. They are of the form  
\begin{equation}
\label{EqnHalfSpaceSolutions}
\frac12[(x\cdot e)_+]^2\hem \text{ for some unit vector }e.
\end{equation} 
The other family consists of singular cones,  known as \textit{parabola solutions}. These solutions are of the form 
\begin{equation}
\label{EqnParabolaSolutions}
\frac12x\cdot Ax, \text{ where  }A\ge0 \text{ and }\mathrm{trace}(A)=1.
\end{equation}

Note that half-space solutions and parabola solutions each form a connected family. Even modulo rotational symmetry, there are infinitely many parabola solutions. We also remark that the contact set of any parabola solution, being a proper subspace of $\R^d$,  has zero measure. See, for instance, Proposition \ref{PropConvexityOfSolutions}. 

\vem

To understand cones in the Alt-Phillips problem \eqref{EqnAP}, it is natural to first consider the situation when $\gamma$ is close to $1$. 

Recently, we constructed axially symmetric cones to \eqref{EqnAP}, when  $\gamma<1$ is close to $1$ and $d\ge4$ \cite{SY2}. For each $\gamma$ in this range, there is a cone whose contact set has zero measure, similar to parabola solutions in the obstacle problem (see \eqref{EqnParabolaSolutions}). Interestingly, there is also a cone whose contact set has positive measure, similar to the De Silva-Jerison cone \cite{DJ} in the Alt-Caffarelli problem ($\gamma=0$). This is the first free boundary problem that allows both possibilities. 

This shows that singular cones  for \eqref{EqnAP}, even when $\gamma$ is close to $1$, exhibit rich and complicated behavior. The first step in understanding these cones is to study their limiting behavior when $\gamma$ converges to $1$.

\vem

As $\gamma\to 1$, minimizing cones for \eqref{EqnAP} concentrate around cones for the obstacle problem (see Proposition \ref{PropCompactness}). Surprisingly, only cones with strong symmetric properties can show up as limits of cones to \eqref{EqnAP} when $\gamma\to1. $ 

To be precise, for  $k\in\{0,1,\dots,d\}$, we define 
\begin{equation}
\label{Eqn0Cone}
P_0(x):=\frac12[(x_1)_+]^2,
\end{equation} 
and
\begin{equation}
\label{EqnkCone}
P_k(x):=\frac{1}{2k}\sum_{1\le j\le k}x_j^2 \hem\text{ for }k=1,2,\dots,d.
\end{equation} 
Our \textbf{main result} reads
\begin{thm}
\label{ThmMain}
Given $\eps>0$, there is $\eta>0$, depending only on $\eps$ and the dimension $d$, such that if $u$ is a minimizing cone for the Alt-Phillips problem \eqref{EqnAP} with
$$
\gamma\in(1-\eta,1)\cup(1,1+\eta),
$$
then, up to a rotation, we have
$$
\|u-P_k\|_{L^\infty(B_1)}<\eps 
$$
 for some $k\in\{0,1,\dots,d\}.$
\end{thm}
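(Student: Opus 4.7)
The natural approach is by contradiction and compactness. Assume the conclusion fails: for some $\varepsilon_0>0$, there exist exponents $\gamma_n \to 1$ (with $\gamma_n \neq 1$) and minimizing cones $u_n$ for $\mathcal{E}_{\gamma_n}$ such that, for every $k \in \{0,\dots,d\}$ and every rotation $R$, $\|u_n - P_k \circ R\|_{L^\infty(B_1)} \geq \varepsilon_0$. By the compactness result (Proposition \ref{PropCompactness}), a subsequence of $(u_n)$ converges uniformly in $B_1$ to a minimizing cone $u_*$ of the classical obstacle problem. The classification recalled in \eqref{EqnHalfSpaceSolutions}--\eqref{EqnParabolaSolutions} then identifies $u_*$ as a half-space solution or a parabola solution. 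In the half-space case, $u_*$ is already of the form $P_0$ up to rotation, immediately contradicting the assumption. Hence $u_* = \tfrac12 x \cdot A x$ is a parabola solution with $A \geq 0$ and $\mathrm{tr}(A) = 1$, and the task reduces to showing that all nonzero eigenvalues of $A$ must coincide.

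To extract this symmetry from the minimality of $u_n$, I would linearize around $u_*$ on the unit sphere. Writing $u_n(x) = |x|^{\alpha_n} \phi_n(\theta)$ with $\alpha_n = 2/(2-\gamma_n)$, the profile $\phi_n$ satisfies on $\{\phi_n>0\}$
\begin{equation*}
\Delta_{S^{d-1}} \phi_n + \alpha_n(\alpha_n + d - 2)\phi_n = \gamma_n \phi_n^{\gamma_n - 1},
\end{equation*}
together with free-boundary and minimality conditions. Setting $\delta_n := \gamma_n - 1$ and formally expanding $\phi_n = \phi_* + \delta_n \psi_n + o(\delta_n)$ around $\phi_*(\theta) = \tfrac12 \theta \cdot A\theta$, the first-order equation reads
\begin{equation*}
L\psi_n = 1 - 2(d+2)\phi_* + \log\phi_*, \qquad L := \Delta_{S^{d-1}} + 2d,
\end{equation*}
on $\{\phi_* > 0\}$. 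Since $\ker L$ on $S^{d-1}$ consists of the restrictions of degree-two harmonic polynomials $\theta \mapsto \theta \cdot B\theta$ with $B$ symmetric and traceless, the Fredholm solvability condition in the nondegenerate case becomes
\begin{equation*}
\int_{S^{d-1}} \bigl[1 - 2(d+2)\phi_* + \log\phi_*\bigr]\, \theta \cdot B\theta \, d\theta = 0
\end{equation*}
for every such $B$.

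Standard second- and fourth-moment identities on the sphere reduce the polynomial terms to an explicit multiple of $\mathrm{tr}(AB)$. By rotational covariance one may take $A$ and $B$ simultaneously diagonal with $B$ traceless, and the condition collapses to a scalar identity in the eigenvalues of $A$. The key analytical content is that this identity is equivalent to the statement that all nonzero eigenvalues of $A$ coincide. For instance, in the two-dimensional case the condition amounts to $J(\mu) = 2\pi \mu$ for $J(\mu) := \int_0^{2\pi} \log(1 + \mu \cos s)\cos s \, ds$, and a direct computation of $J'(\mu)$ shows that this holds only at $\mu = 0$ and $\mu = 1$, corresponding to $P_2$ and $P_1$ respectively. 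An analogous eigenvalue analysis in higher dimensions should pick out precisely the $P_k$'s among all parabolas.

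The main obstacle is to convert this analytical obstruction into a genuine contradiction with the minimality of $u_n$. Whenever $A$ has two distinct nonzero eigenvalues, the plan is to use the nontrivial $\ker L$-component on the right-hand side to construct a compactly supported quadratic perturbation of $u_n$ (essentially by adjoining a small multiple of an appropriate harmonic quadratic form $x \cdot Bx$, cut off near $\partial B_1$) that strictly decreases $\mathcal{E}_{\gamma_n}(u_n)$ for large $n$, contradicting minimality. In the degenerate case where $A$ has a nontrivial kernel, so that $\phi_*$ vanishes on a portion of $S^{d-1}$ and $\log\phi_*$ is singular on the free boundary, additional care is needed to track the free-boundary contribution in the linearization and to justify the formal expansion near $\{\phi_*=0\}$; this, together with turning the Fredholm obstruction into a second-variation estimate with a quantitative rate in $\delta_n$, is where the bulk of the technical work is expected to lie.
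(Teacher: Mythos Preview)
Your compactness-and-contradiction framework matches the paper's exactly, and you correctly isolate the core task: show that if $u_n\in\mathcal{K}(\gamma_n)$ with $\gamma_n\to 1$ converges to a parabola $p=\tfrac12 x\cdot Ax$, then all nonzero eigenvalues of $A$ coincide. The divergence is in how this rigidity is extracted.

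Your proposal linearizes the spherical equation for the original profile $\phi_n$ in the parameter $\delta_n=\gamma_n-1$, producing a right-hand side containing $\log\phi_*$, and then reads off a Fredholm-type orthogonality against traceless quadratics. There are two genuine gaps here. First, the expansion $\phi_n=\phi_*+\delta_n\psi_n+o(\delta_n)$ is assumed, not proved: compactness only gives $\phi_n\to\phi_*$ with no rate, and establishing that $(\phi_n-\phi_*)/\delta_n$ stays bounded is itself nontrivial and not addressed. Second, as you note, $\log\phi_*$ is singular on $\{\phi_*=0\}$, which is nonempty whenever $A$ is degenerate; your plan to handle this via a free-boundary correction and a second-variation energy argument is left entirely open, and this is not a minor technicality---the degenerate case is exactly where the interesting limits $P_k$ with $k<d$ live.

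The paper sidesteps both issues by working not with the original cone but with the transformed function $u=\tfrac{1}{\gamma(2-\gamma)}v^{2/\beta}$, which satisfies $\Delta u+\tfrac{\beta-2}{2}\,\tfrac{|\nabla u|^2}{u}=\chi_{\{u>0\}}$. The point is that the small parameter $\beta-2\sim 2(\gamma-1)$ now multiplies the \emph{bounded} quantity $|\nabla u|^2/u$ rather than producing a $\log u$. Testing $\Delta(u-p)$ against the harmonic $2$-homogeneous function $p-\tfrac{1}{2d}|x|^2$ via Green's identity then yields an \emph{exact} relation at each $n$ (no expansion needed), and passing to the limit gives $\int_{\partial B_1}\tfrac{|\nabla p|^2}{p}\bigl(p-\tfrac{1}{2d}|x|^2\bigr)\le 0$. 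The paper's Lemma~\ref{LemMagicLemma} shows this integral is always $\ge 0$ with equality precisely when $p\in\mathcal{SP}$, and its proof is by induction on dimension together with concavity of an interpolation in $t\mapsto tp+(1-t)P_d$---an argument rather different from your proposed moment computation. The transformation is the missing idea: it converts your formal $\log$-linearization into a clean bounded identity that survives the limit uniformly, including in the degenerate case.
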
 

\begin{rem}
To the knowledge of the authors, this is the first result on concentration-induced symmetry in free boundary problems. 

The cones in \eqref{EqnkCone} are fully radial in a subspace of $\R^d$ and invariant along directions perpendicular to this subspace. 

For each integer $k\in\{0,1,\dots,d\}$, up to a rotation,  there is exactly one cone, $P_k$, that can show up as a limiting profile with $$\mathrm{dim}(\{P_k=0\})=d-k.$$
\end{rem}
\vem
The heart of  Theorem \ref{ThmMain} is the following integral inequality on parabola solutions in \eqref{EqnParabolaSolutions}. This inequality is stable in the natural topology of our problem (see Lemma \ref{LemMagicLemmaForV}), and the equality case captures cones with symmetry from \eqref{EqnkCone}.
\begin{lem}
\label{LemMagicLemma}
Suppose that $p$ is a parabola solution to the classical obstacle problem in $\R^d$ from \eqref{EqnParabolaSolutions}, then
\begin{equation*}
\int_{\partial B_1}\frac{|\nabla p|^2}{p}(p-\frac{1}{2d}|x|^2)\ge0.
\end{equation*} 
The equality  holds if and only if, up to a rotation, 
$$
p=P_k
$$
 for some $k=1,2,\dots,d$ from \eqref{EqnkCone}.
\end{lem}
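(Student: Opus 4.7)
The plan is to diagonalize the matrix $A$ defining the parabola solution $p$, rewrite the integral inequality in terms of a Gaussian expectation, and then reduce it to a statement about majorization of probability vectors which closes by Hardy--Littlewood--P\'olya.

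By \eqref{EqnParabolaSolutions} and rotational invariance, assume $A = \mathrm{diag}(\lambda_1,\ldots,\lambda_d)$ with $\lambda_i\ge 0$ and $\sum_i \lambda_i = 1$. A direct computation on $\PB$ (using $|x|^2 = 1$) gives
\[
\int_{\PB}\frac{|\nabla p|^2}{p}\Bigl(p-\tfrac{|x|^2}{2d}\Bigr)\,d\sigma = \frac{1}{d}\Bigl(|\PB|\,\mathrm{tr}(A^2) - J(A)\Bigr), \qquad J(A):=\int_{\PB}\frac{|A\omega|^2}{\omega\cdot A\omega}\,d\sigma,
\]
so the lemma is equivalent to $J(A)\le |\PB|\,\mathrm{tr}(A^2)$. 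Because the integrand is $0$-homogeneous, with $w\sim N(0,I_d)$ this is $\mathbb{E}[Q/R]\le \mathrm{tr}(A^2) = \mathbb{E}[Q]$, where $Q:=\sum_i \lambda_i^2 w_i^2$ and $R:=\sum_j \lambda_j w_j^2$. Using $1/R=\int_0^\infty e^{-tR}dt$, the Gaussian moments $\mathbb{E}[w_i^2 e^{-tR}]=(1+2t\lambda_i)^{-3/2}\prod_{j\ne i}(1+2t\lambda_j)^{-1/2}$, and the change of variable $u=\tfrac12\sum_j \log(1+2t\lambda_j)$, the expectation simplifies to
\[
\mathbb{E}[Q/R] = \sum_i \lambda_i\, a_i, \qquad a_i = \int_0^\infty p_i(t(u))\,e^{-u}\,du, \qquad \sum_i a_i = 1,
\]
where $p_i(t):=\bigl(\lambda_i/(1+2t\lambda_i)\bigr)\big/\sum_j \bigl(\lambda_j/(1+2t\lambda_j)\bigr)$ is a probability vector with $p(0)=\lambda$.

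The heart of the argument is the claim $a\prec\lambda$ in majorization order; granted this, writing $a=D\lambda$ for a doubly stochastic $D$ (Hardy--Littlewood--P\'olya), Birkhoff's decomposition combined with the rearrangement inequality yields $\sum_i \lambda_i a_i = \lambda^T D \lambda \le \sum_i \lambda_i^2$, which is the bound sought. Since majorization is preserved by convex combinations and $a$ is a convex combination of the $p(t)$'s, it suffices to show $p(t)\prec\lambda$ for every $t\ge 0$. A short computation using $x_i'=-2x_i^2$ for $x_i(t):=\lambda_i/(1+2t\lambda_i)$ produces the flow
\[
\frac{dp_i}{dt} = 2\,S(t)\,p_i\,(M_2 - p_i), \qquad S:=\sum_j x_j,\quad M_2:=\sum_j p_j^2.
\]
Each difference $p_i-p_j$ satisfies a linear ODE in itself, so the coordinate ordering is preserved by the flow; assuming $\lambda_1 \ge \cdots \ge \lambda_d$ (and hence $p_1(t) \ge \cdots \ge p_d(t)$), the desired monotonicity $\tfrac{d}{dt}\sum_{i\le k}p_i \le 0$ reduces to $\sum_{i\le k}p_i^2/\sum_{i\le k}p_i \ge M_2$, which follows from the identity
\[
\frac{\sum_{i\le k}p_i^2}{\sum_{i\le k}p_i} - M_2 = \sum_{j>k} p_j\Bigl(\frac{\sum_{i\le k}p_i^2}{\sum_{i\le k}p_i} - p_j\Bigr),
\]
each bracket being $\ge p_k - p_j \ge 0$.

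Tracking equality throughout forces $a=\lambda$, hence $p(t)\equiv\lambda$, hence $dp_i/dt\equiv 0$; the flow equation then imposes $\lambda_i\in\{0,M_2\}$ for every $i$, which together with $\sum_i \lambda_i=1$ means $\lambda$ is the spectrum of $P_k$ for some $k\in\{1,\ldots,d\}$, i.e.\ $p = P_k$ up to rotation. The main obstacle I anticipate is the flow/majorization step — deriving the ODE for $p(t)$ cleanly and pushing through the top-$k$ partial-sum monotonicity — while the probabilistic reformulation and the closing Birkhoff / Hardy--Littlewood--P\'olya argument are standard.
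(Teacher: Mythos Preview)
Your argument is correct and follows a genuinely different route from the paper's. The paper proceeds by induction on the dimension: it interpolates $p_t := tp + (1-t)P_d$, proves that $q(t):=t^{-2}\int_{\partial B_1}\frac{|\nabla p_t|^2}{p_t}(p_t-\tfrac{1}{2d}|x|^2)$ is concave on $[0,\bar t]$ where $\bar t$ is the first $t$ at which $p_t$ becomes degenerate along some axis, observes $q(0)=4\int_{\partial B_1}(p-P_d)^2\ge 0$, and invokes the induction hypothesis (through a dimension-reduction identity) to get $q(\bar t)\ge 0$; concavity then yields $q(1)\ge 0$, and the equality case falls out of the concavity and the inductive characterization. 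Your approach instead diagonalizes, passes to a Gaussian expectation via $0$-homogeneity, and reduces the inequality to $\sum_i\lambda_i a_i\le\sum_i\lambda_i^2$ for a probability vector $a$ that you show is majorized by $\lambda$ by analyzing an explicit ODE flow $p(t)$ on the simplex; Birkhoff plus rearrangement then closes the inequality, and the equality case emerges from the fixed-point condition $\lambda_i\in\{0,M_2\}$ of the flow. The paper's route is shorter and entirely self-contained within elementary calculus on the sphere, and its interpolation is in the same spirit as the later stability arguments. Your route is more algebraic, avoids the dimensional induction altogether, and the majorization $p(t)\prec\lambda$ for every $t$ is a slightly stronger intermediate statement than anything in the paper; the equality analysis via the flow equation is also particularly clean. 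Both arguments handle vanishing eigenvalues without extra work.
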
 

\vem
This paper is \textbf{structured as follows}: In Section \ref{SectionPreliminaries}, we collect some preliminaries on the Alt-Phillips problem \eqref{EqnAP} and introduce some notations. Section \ref{SectionInequality} is devoted to the proof of Lemma \ref{LemMagicLemma}. With this, we prove Theorem \ref{ThmMain} in Section \ref{SectionConesWithSymmetry}.

\section{Preliminaries}
\label{SectionPreliminaries}

In Subsection \ref{SubsectionAP}, we collect some useful facts about the Alt-Phillips functional in \eqref{EqnAP}. We then transform the minimizers to a different form, to take advantage of the proximity of $\gamma$ to $1$. Properties of this transformed solution are presented in Subsection \ref{SubsectionTransformedSolution}. In Subsection \ref{SubsectionObP}, we recall properties of the classical obstacle problem.

\subsection{The Alt-Phillips functional}
\label{SubsectionAP}

Recall the Alt-Phillips functional $\mathcal{E}_\gamma$ from \eqref{EqnAP}. In this work, we focus on the case when  $\gamma$ is close to $1$. In particular, we always assume
\begin{equation*}
\gamma\in[1/2,3/2].
\end{equation*} 

Suppose that $u$ is a minimizer of $\EGa$ in $B_1$, then it satisfies the \textit{Euler-Lagrange equation} (see \cite{AP})
\begin{equation}
\label{EqnELEquation}
\Delta u=\gamma u^{\gamma-1}\chi_{\PosS}, \hem u\ge 0\hem \text{ in }B_1.
\end{equation}

For each $\gamma$, we define the \textit{scaling parameter} as
\begin{equation}
\label{EqnBeta}
\beta:=\frac{2}{2-\gamma}=2-\frac{2(1-\gamma)}{2-\gamma}.
\end{equation} 
When $\gamma$ is close to $1$, this parameter $\beta$ is close to $2$. 

The scaling parameter arises from  the symmetry of $\EGa$. To be precise, for each $r>0$, if we take
$$
u_r(x):=r^{-\beta}u(rx),
$$
then we have 
$$
\EGa(u_r;B_1)=r^{2-2\beta-d}\EGa(u;B_r).
$$
In particular, if $u$ is a non-trivial minimizing cone for $\EGa$ in \eqref{EqnAP}, then \textit{$u$ must be $\beta$-homogeneous}. As a consequence, the flat cone for $\EGa$ is of the form
\begin{equation}
\label{EqnFlatConeAP}
u(x)=c_\gamma[(x\cdot e)_+]^\beta \text{ for some unit vector}\hem e,
\end{equation} 
where the coefficient $c_\gamma$ satisfies
$
c_\gamma^{2-\gamma}=(2-\gamma)^2/2.
$

Compactness of  minimizers follows from the following:
\begin{thm}[See \cite{AP}]
\label{ThmRegularityOfMinimizerAP}
Suppose that $u$ is a minimizer of $\EGa$ from \eqref{EqnAP} in $B_1\subset\R^d$. 

If $0\in\PPosS$, then there is a dimensional constant $C_d$ such that 
\begin{enumerate}
\item{If $\gamma\in[1/2,1]$, then $\|u\|_{C^{1,\beta-1}(B_{1/2})}\le C_d$;}
\item{If $\gamma\in(1,3/2]$, then $\|u\|_{C^{2,\beta-2}(B_{1/2})}\le C_d.$}
\end{enumerate}
Moreover, we have
$$
\sup_{B_{1/2}\cap\PosS}\Diri/u^\gamma+\sup_{B_{1/2}\cap\PosS}|D^2u|/u^{\gamma-1}\le C_d.
$$
\end{thm}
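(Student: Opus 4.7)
The plan is to exploit the scaling $u_r(x) = r^{-\beta}u(rx)$ of $\EGa$ and reduce the estimates to three ingredients: (a) optimal upper growth $\sup_{B_r} u \le C_d r^\beta$ for minimizers with $0\in\PPosS$, (b) matching non-degeneracy $\sup_{B_r} u \ge c_d r^\beta$, and (c) interior regularity for the Euler--Lagrange equation \eqref{EqnELEquation} on $\PosS$. Given (a)--(c), the pointwise quotient bounds $|\nabla u|^2/u^\gamma \le C_d$ and $|D^2u|/u^{\gamma-1} \le C_d$ follow by rescaling into a unit ball fully contained in $\PosS$, and the global H\"older regularity up to $\PPosS$ follows by a standard distance-to-free-boundary interpolation.

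I would first establish (a) and (b) by direct variational comparison. For (a): with $M_r := \sup_{B_r} u$, if $M_r \gg r^\beta$ then truncating $u$ from above on $B_{r/2}$ at height $M_r/2$ saves $\int u^\gamma \chi_{\PosS}$ on a set of definite size, while the extra Dirichlet energy created by the cap is controlled by $M_r^2 r^{d-2}$; the balance $M_r^2 r^{d-2} \lesssim M_r^\gamma r^d$ fails when $M_r \gg r^\beta$, contradicting minimality. For (b): if $u\le \eta r^\beta$ throughout $B_r$ with $\eta$ small, the competitor $(u-c r^\beta)_+$ on $B_r$ strictly decreases $\EGa$. Since $\beta\in[4/3,4]$ on $\gamma\in[1/2,3/2]$, both constants are dimensional.

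Next I would combine (a), (b), (c) by scaling. Fix $x_0\in B_{1/2}\cap\PosS$ and let $d_0=\mathrm{dist}(x_0,\PPosS)$; we may assume $d_0\le 1/4$. Let $y_0\in\PPosS$ realize the distance, and rescale $v(y)=(2d_0)^{-\beta}u(x_0+2d_0\, y)$ on $B_1$. Then $v$ is a minimizer with $v>0$ on $B_{1/2}$ solving $\Delta v=\gamma v^{\gamma-1}$; (a) at $y_0$ gives $\|v\|_{L^\infty(B_1)}\le C_d$, and (b) at $y_0$ gives some $\bar z\in B_1$ with $v(\bar z)\ge c_d$. A Harnack chain inside $\PosS$ propagates this to a uniform lower bound $v\ge c_d$ on a fixed neighborhood of the origin, so that $\gamma v^{\gamma-1}$ is a bounded H\"older function; classical Schauder theory then yields $\|v\|_{C^{1,\beta-1}(B_{1/4})}\le C_d$ when $\gamma\in[1/2,1]$ and $\|v\|_{C^{2,\beta-2}(B_{1/4})}\le C_d$ when $\gamma\in(1,3/2]$. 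Unscaling produces $|\nabla u(x_0)|\le C_d d_0^{\beta-1}$, $|D^2 u(x_0)|\le C_d d_0^{\beta-2}$, and $u(x_0)\asymp d_0^\beta$; since $\beta-1=\beta\gamma/2$ and $\beta-2=\beta(\gamma-1)$, these are exactly the quotient bounds in the theorem.

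The main obstacle is the lower bound on $v$ near the origin: for $\gamma<1$ the nonlinearity $v^{\gamma-1}$ is singular at $v=0$, so Schauder genuinely requires two-sided control, and a direct Harnack inequality is awkward because the bound on its right-hand side depends on $\inf v$ itself. The cleanest resolution is to compare $v$ against a barrier built from the flat cone \eqref{EqnFlatConeAP} centered at $y_0$: non-degeneracy at $y_0$ forces $v$ to strictly exceed the barrier at some interior point, and the maximum principle for the nonlinear PDE then spreads the lower bound to a full neighborhood of the origin. The constants stay dimensional because $c_\gamma$ and $\beta$ are uniformly bounded on $\gamma\in[1/2,3/2]$.
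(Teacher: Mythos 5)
The paper does not prove Theorem \ref{ThmRegularityOfMinimizerAP}; it is cited from Alt--Phillips \cite{AP}, so there is no in-paper argument to compare against. Judging your sketch on its own merits: the overall scheme (optimal growth, matching non-degeneracy, rescaling by $\mathrm{dist}(x_0,\PPosS)$, interior Schauder) is indeed the standard route and is essentially how the literature obtains these estimates. Two of the details you wave at, however, do not go through as written.

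First, the non-degeneracy competitor $(u-cr^\beta)_+$ is not admissible: under the hypothesis $u\le \eta r^\beta$ on $B_r$, this function fails to equal $u$ on $\partial B_r$, so it cannot be inserted into the energy comparison. The fix is routine but necessary: one must subtract a compactly supported bump, e.g.\ take $w=(u-\phi)_+$ with $\phi$ a smooth cutoff of height $c r^\beta$ supported in $B_r$ and identically $c r^\beta$ on $B_{r/2}$, so that $w=u$ near $\partial B_r$ while $w\equiv0$ on $B_{r/2}$ if $c>\eta$; then the energy saving on $B_{r/2}$ beats the Dirichlet cost of the cutoff exactly when $\eta$ is small. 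Second, and more seriously, the resolution you propose for the lower bound on $v$ --- ``compare $v$ against a barrier built from the flat cone and invoke the maximum principle for the nonlinear PDE'' --- is precisely where the argument breaks for $\gamma<1$. The right-hand side of \eqref{EqnELEquation}, namely $\gamma u^{\gamma-1}\chi_{\PosS}$, is a \emph{decreasing} function of $u$ when $\gamma<1$. Consequently the comparison principle between solutions and sub/supersolutions fails in exactly the direction you need: writing $h=U_e-u$ and restricting to $\{h>0\}$ one finds $\Delta h<0$, i.e.\ $h$ is superharmonic with zero boundary values on $\partial\{h>0\}$, which is consistent rather than contradictory. So the barrier does not ``spread the lower bound.'' One has to replace this step by something genuinely different, e.g.\ passing to a power $u^{1/\beta}$ or $u^{2/\beta}$ (as the paper itself does in Subsection \ref{SubsectionTransformedSolution}) for which the growth is linear/quadratic and the equation's zeroth-order term becomes bounded, or by a more careful iteration that exploits minimality rather than the pointwise PDE. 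You correctly flagged this as ``the main obstacle,'' but the proposed fix does not work, and filling this gap is the substantive content of the cited result.
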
 
Recall the parameter $\beta$ from \eqref{EqnBeta}.

When $\gamma\ge1,$ the following dichotomy is a consequence of  the convexity of global solutions:
\begin{prop}[\cite{B, C, PSU, WY}]
\label{PropConvexityOfSolutions}
If $u$ is a minimizing cone for  $\EGa$ in \eqref{EqnAP} with $\gamma\in[1,2)$, then either $u$ is the flat cone in \eqref{EqnFlatConeAP}, or $\{u=0\}$ has zero measure.
\end{prop}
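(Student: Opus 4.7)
The plan is to combine the known convexity of global minimizers for $\gamma\in[1,2)$ with a structural analysis of the convex cone $\{u=0\}$ forced by homogeneity.

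\emph{Convexity.} The starting point is that every bounded global minimizer of $\EGa$ with $\gamma\in[1,2)$ is convex. For the obstacle case $\gamma=1$ this is classical (see \cite{C, PSU}). For $\gamma\in(1,2)$ it is established in \cite{B, WY}, typically via semi-concavity of $u^{1/\beta}$ or a concavity maximum principle built on the $C^{2,\beta-2}$ estimate of Theorem~\ref{ThmRegularityOfMinimizerAP}. Consequently, the minimizing cone $u$ is convex, non-negative, and $\beta$-homogeneous.

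\emph{Lineality reduction.} Since $u\ge 0$ is convex, $\{u=0\}$ is convex; since $u$ is $\beta$-homogeneous, it is also a cone. Assume $|\{u=0\}|>0$, so $\{u=0\}$ has non-empty interior. Let $L$ be the lineality space of $\{u=0\}$, i.e., the largest linear subspace contained in it. For any $e\in L$ the whole line $\R e$ lies in $\{u=0\}$, and a short convexity argument (a non-negative convex function vanishing on a line is constant along every parallel line) shows that $u$ is invariant under translation by $e$. Hence $u$ descends to a convex, non-negative, $\beta$-homogeneous minimizer $\tilde u$ on $V:=L^\perp$, whose contact set $\{\tilde u=0\}\subset V$ is a pointed convex cone with non-empty interior.

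\emph{Rigidity and conclusion.} If $\dim V=1$, a pointed convex cone with non-empty interior is a half-line; the only non-trivial $\beta$-homogeneous, convex, non-negative minimizer on $\R$ with such a contact set is the 1D flat cone $c_\gamma (t_+)^\beta$ (or its reflection), and lifting back yields the flat cone \eqref{EqnFlatConeAP}. To exclude $\dim V\ge 2$, I would pick a relatively interior point $x_0$ of a top-dimensional face of $\partial\{\tilde u=0\}\cap\partial B_1$ with outward unit normal $\nu_0$, and blow $\tilde u$ up at $x_0$: convexity, the flatness of the face, and minimality identify the blow-up as the half-space flat cone with normal $\nu_0$. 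A Weiss-type monotonicity argument centered at $x_0$, combined with the $\beta$-homogeneity of $\tilde u$ centered at the origin, then forces $\tilde u$ to agree \emph{globally} with that flat cone, contradicting pointedness of $\{\tilde u=0\}$. The main obstacle is this last propagation step: it exploits in an essential way the compatibility of two cone structures for $\tilde u$ (centered at both $0$ and $x_0\neq 0$) to pin $\tilde u$ down as the flat cone throughout $V$.
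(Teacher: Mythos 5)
The paper does not actually prove Proposition~\ref{PropConvexityOfSolutions}; it is stated as a known consequence of the convexity theory for global solutions and is cited to \cite{B,C,PSU,WY}, then used as a black box. So there is no in-paper argument to compare against, and your attempt must be judged on its own.

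Your first two steps are sound. Convexity for $\gamma\in[1,2)$ is indeed the right input. The lineality reduction also works, and your parenthetical claim is true and even elementary: if $\R e\subset\{u=0\}$, then for any $x$ the midpoint identity $x+te=\tfrac12(2te)+\tfrac12(2x)$ gives $u(x+te)\le\tfrac12 u(2te)+\tfrac12 u(2x)=\tfrac12 u(2x)$, so the convex function $t\mapsto u(x+te)$ is bounded above and hence constant; no homogeneity is even needed here. The $\dim V=1$ case is then immediate from the one--dimensional ODE.

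The genuine gap is exactly where you flag it, and it is not a removable one as phrased. After blowing $\tilde u$ up at a regular boundary point $x_0$ of the pointed cone $\{\tilde u=0\}$ (granting that this blow-up is the flat cone, which itself quietly needs an induction on dimension that you do not set up), Weiss monotonicity centered at $x_0$ only yields
$W(0^+,x_0)\le W(\infty,x_0)$, i.e.\ $W_{\mathrm{flat}}\le W(\tilde u)$.
This inequality holds for \emph{every} non-trivial minimizing cone, because the flat cone has the least Weiss density; it is not a contradiction. To force equality -- which is what would make $W(\cdot,x_0)$ constant and propagate homogeneity about $x_0$ -- you would need a separate comparison argument giving the reverse inequality $W(\tilde u)\le W_{\mathrm{flat}}$ under the hypothesis $|\{\tilde u=0\}|>0$, and nothing in your sketch supplies it. The references handle this final rigidity step (for $\gamma=1$ via the full classification of $2$-homogeneous solutions; for $\gamma\in(1,2)$ via the structure theory for convex global solutions in \cite{B,PSU,WY}), and your proof is incomplete without importing an argument of that type. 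In short: correct strategy, correct reduction, but the last step needs a genuinely different ingredient than Weiss monotonicity alone.
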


\subsection{The transformed solution}
\label{SubsectionTransformedSolution}
Suppose $v$ is a minimizer of $\EGa$ in \eqref{EqnAP}, we define the \textit{transformed solution} as
\begin{equation}
\label{EqnTransformedSolution}
u:=\frac{1}{\gamma(2-\gamma)}v^{\frac{2}{\beta}},
\end{equation} 
where $\beta$ is from \eqref{EqnBeta}.  If $v$ is $\beta$-homogeneous, then $u$ is $2$-homogeneous. When $\gamma$ is close to $1$, or equivalently, when $\beta$ is close to $2$, this transformed solution solves an equation similar to the classical obstacle problem. See equations \eqref{EqnTransformedEquation}  and \eqref{EqnClassicalObstacleProblem}.

The flat cone in \eqref{EqnFlatConeAP} is transformed to 
\begin{equation}
\label{EqnFlatConeTransformed}
u(x)=\frac{2-\gamma}{2\gamma}[(x\cdot e)_+]^2,
\end{equation} 
where $e$ is a unit vector. 

To simplify our exposition, we introduce the class of transformed cones:
\begin{defi}
\label{DefCones}
Suppose that $v$ is a minimizing cone for $\EGa$ from  \eqref{EqnAP} with  $\gamma\in[1/2,3/2]$, and that $u$ is related to $v$ as in \eqref{EqnTransformedSolution}. 

Then we call $u$ a \textit{transformed cone for }$\EGa$, and write
$$
u\in\mathcal{K}(\gamma).
$$
\end{defi}

Following \eqref{EqnELEquation} and Theorem \ref{ThmRegularityOfMinimizerAP}, we have
\begin{prop}
\label{PropTransformed}
Suppose that $u\in\mathcal{K}(\gamma)$ with $\gamma\in[1/2,3/2]$.

Then $u$ is a $2$-homogeneous solution to 
\begin{equation}
\label{EqnTransformedEquation}
\Delta u+\frac{\beta-2}{2}\frac{|\nabla u|^2}{u}=\chi_{\{u>0\}}, \hem u\ge0\hem \text{ in }\R^d.
\end{equation}

Moreover, we have
$$
u\in C^{1,1}_{loc}(\R^d)
$$ 
with 
$$
|\nabla u|^2/u+|D^2u|\le C_d \text{ in }\PosS
$$
for a dimensional constant $C_d$.
\end{prop}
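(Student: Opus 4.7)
The proof should consist of three straightforward verifications, since $u$ is obtained from $v$ by a pointwise power transformation, and all the properties will follow by direct differentiation combined with the properties of $v$ stated in Theorem \ref{ThmRegularityOfMinimizerAP} and equation \eqref{EqnELEquation}. Throughout, I would use the crucial identity $2/\beta = 2-\gamma$, which gives the cleaner form $u = \frac{1}{\gamma(2-\gamma)} v^{2-\gamma}$, together with the identity $\beta - 2 = \frac{2(\gamma-1)}{2-\gamma}$ from \eqref{EqnBeta}.

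\textbf{Homogeneity.} Since $v$ is a minimizing cone, it is $\beta$-homogeneous, so $v(rx) = r^{\beta} v(x)$. Then $u(rx) = \frac{1}{\gamma(2-\gamma)} r^{\beta \cdot 2/\beta} v(x)^{2/\beta} = r^2 u(x)$, establishing $2$-homogeneity.

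\textbf{The transformed PDE.} I would work first in the open set $\{v>0\} = \{u>0\}$, where $v$ is smooth and satisfies $\Delta v = \gamma v^{\gamma-1}$ by \eqref{EqnELEquation}. A direct chain-rule computation gives
\[
\nabla u = \tfrac{1}{\gamma} v^{1-\gamma} \nabla v,
\qquad
\Delta u = \tfrac{1-\gamma}{\gamma} v^{-\gamma}|\nabla v|^2 + \tfrac{1}{\gamma} v^{1-\gamma} \Delta v = \tfrac{1-\gamma}{\gamma} v^{-\gamma}|\nabla v|^2 + 1,
\]
and also $\frac{|\nabla u|^2}{u} = \frac{2-\gamma}{\gamma} v^{-\gamma} |\nabla v|^2$. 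Substituting the identity for $\beta - 2$ yields
\[
\frac{\beta-2}{2} \cdot \frac{|\nabla u|^2}{u} = \frac{\gamma-1}{2-\gamma} \cdot \frac{2-\gamma}{\gamma} v^{-\gamma}|\nabla v|^2 = \frac{\gamma-1}{\gamma} v^{-\gamma}|\nabla v|^2,
\]
which exactly cancels the bad term in $\Delta u$ and leaves $1$ on the right. On $\{u = 0\}$ the identity holds trivially in a pointwise sense once the $C^{1,1}$ regularity of $u$ is established.

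\textbf{Regularity.} From Theorem \ref{ThmRegularityOfMinimizerAP}, inside the positive set we have $|\nabla v|^2 \le C v^\gamma$ and $|D^2 v| \le C v^{\gamma-1}$. Plugging into the formula for $\nabla u$ gives $|\nabla u| \le C v^{1-\gamma/2}$, and since $v \sim u^{1/(2-\gamma)}$ this rewrites as $|\nabla u|^2 \le C u$, i.e.\ $|\nabla u|^2/u \le C_d$. Plugging the same bounds into
\[
D^2 u = \tfrac{1}{\gamma} v^{1-\gamma} D^2 v + \tfrac{1-\gamma}{\gamma} v^{-\gamma} \nabla v \otimes \nabla v
\]
shows that each term is uniformly bounded by a dimensional constant, so $|D^2 u| \le C_d$ in $\{u>0\}$. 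Since $u$ and $\nabla u$ vanish continuously on $\partial\{u>0\}$ (indeed $|\nabla u| \le C u^{1/2}$), these interior bounds upgrade to $u \in C^{1,1}_{\mathrm{loc}}(\mathbb{R}^d)$ in the usual way.

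There is no serious obstacle: the whole proposition reduces to a chain-rule exercise together with an algebraic check that the chosen normalization constant $\frac{1}{\gamma(2-\gamma)}$ is the one that produces the coefficient $1$ on the right-hand side of \eqref{EqnTransformedEquation}. The only minor point requiring care is passing from the $C^{1,1}$ estimate inside the positive set to $C^{1,1}_{\mathrm{loc}}$ on all of $\mathbb{R}^d$, which I would handle by using the bound $|\nabla u|^2 \le C u$ to obtain Lipschitz continuity of $\nabla u$ across the free boundary.
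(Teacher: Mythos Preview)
Your proposal is correct and is exactly the computation the paper has in mind: the proposition is stated in the paper without proof, merely as ``Following \eqref{EqnELEquation} and Theorem \ref{ThmRegularityOfMinimizerAP},'' and your chain-rule derivation of \eqref{EqnTransformedEquation} together with the substitution of the bounds from Theorem \ref{ThmRegularityOfMinimizerAP} into the formulas for $\nabla u$ and $D^2u$ is precisely that. The only implicit step worth making explicit is that the estimates in Theorem \ref{ThmRegularityOfMinimizerAP} are stated on $B_{1/2}$, but since $v$ is a homogeneous cone they extend to all of $\R^d$ by scaling.
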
 
Recall the scaling parameter $\beta$ from \eqref{EqnBeta}.

Following Proposition \ref{PropConvexityOfSolutions}, we have
\begin{prop}
\label{PropConvexityTransformed}
Suppose that $u\in\KGa$ with $\gamma\in[1,3/2]$, then
either $u$ is the flat cone in \eqref{EqnFlatConeTransformed}, or $\ConS$ has zero measure. 
\end{prop}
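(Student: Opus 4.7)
My plan is to obtain the proposition as a direct translation of Proposition \ref{PropConvexityOfSolutions} through the change of variables \eqref{EqnTransformedSolution} that underlies Definition \ref{DefCones}. The core observation is that this change of variables is a pointwise monotone bijection between non-negative functions, so it transports zero-set information faithfully.

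Concretely, unwinding Definition \ref{DefCones}, I start with a minimizing cone $v$ of $\EGa$ for some $\gamma\in[1,3/2]\subset[1,2)$ such that $u=\frac{1}{\gamma(2-\gamma)}v^{2/\beta}$. Since $t\mapsto t^{2/\beta}$ is a strictly increasing homeomorphism of $[0,\infty)$ fixing the origin, the zero sets coincide pointwise, $\{u=0\}=\{v=0\}$. Applying Proposition \ref{PropConvexityOfSolutions} to $v$ then forces one of the two alternatives.

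In the alternative where $\{v=0\}$ has zero Lebesgue measure, the identification of zero sets immediately gives that $\ConS$ has zero measure. In the remaining case $v=c_\gamma[(x\cdot e)_+]^\beta$ from \eqref{EqnFlatConeAP}, I compute $u$ directly: since $2/\beta=2-\gamma$ by \eqref{EqnBeta} and $c_\gamma^{2-\gamma}=(2-\gamma)^2/2$, the coefficient $c_\gamma^{2/\beta}/(\gamma(2-\gamma))$ collapses to $(2-\gamma)/(2\gamma)$, yielding precisely the flat cone \eqref{EqnFlatConeTransformed}.

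I do not anticipate any substantive obstacle; this is essentially a bookkeeping argument that relies entirely on Proposition \ref{PropConvexityOfSolutions} for its analytic content. The only point that requires some care is the arithmetic with $\beta$ in recovering \eqref{EqnFlatConeTransformed} from \eqref{EqnFlatConeAP}, which is the same computation already used when introducing the transformed flat cone.
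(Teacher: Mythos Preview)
Your argument is correct and follows exactly the route indicated in the paper: the proposition is stated immediately after ``Following Proposition \ref{PropConvexityOfSolutions}, we have'', and your proof simply spells out that the transformation \eqref{EqnTransformedSolution} preserves zero sets and carries the flat cone \eqref{EqnFlatConeAP} to \eqref{EqnFlatConeTransformed}. There is nothing to add or correct.
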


\subsection{The obstacle problem}
\label{SubsectionObP}
When $\gamma=1$, the Alt-Phillips functional $\EGa$ in \eqref{EqnAP} becomes the energy for the classical obstacle problem \cite{C, PSU}.

For this exponent, the scaling parameter in \eqref{EqnBeta} satisfies 
$
\beta=2.
$
The equation in \eqref{EqnTransformedEquation} takes the simple form
\begin{equation}
\label{EqnClassicalObstacleProblem}
\Delta u=\chi_{\PosS},\hem u\ge 0\hem \text{ in }\R^d.
\end{equation}

We have seen  half-space solutions and parabola solutions in \eqref{EqnHalfSpaceSolutions} and \eqref{EqnParabolaSolutions}. Recall the cones with symmetry from \eqref{Eqn0Cone} and \eqref{EqnkCone}. For brevity, we introduce the following notations:
\begin{defi}
\label{DefObPCones}
The collection of half-space solutions for the classical obstacle problem is denoted by $\mathcal{HS}$, namely,
$$
\mathcal{HS}:=\{\frac12[(x\cdot e)_+]^2: \hem e\in\R^d, \hem |e|=1\}.
$$
The collection of parabola solutions is denoted by $\mathcal{P}$, namely,
$$
\mathcal{P}:=\{\frac12x\cdot Ax:\hem A\ge0,\hem \mathrm{trace}(A)=1\}.
$$
The collection of parabola solutions with symmetry is denoted by $\mathcal{SP}$, namely,
$$
\mathcal{SP}:=\{p\in\mathcal{P}: \hem \text{up to a rotation,}\hem p=P_k \text{ for some }k\in\{1,2,\dots,d\} \text{ from }\eqref{EqnkCone}\}.
$$
\end{defi}

Recall the space of cones $\mathcal{K}(\gamma)$ from Definition \ref{DefCones}. We have
\begin{prop}[See \cite{C, PSU}]
\label{PropObPCones}
Suppose that $u\in\mathcal{K}(2)$. Then 
$
u\in\mathcal{HS}\cup\mathcal{P}.
$
\end{prop}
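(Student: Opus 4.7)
My plan is to apply Proposition \ref{PropConvexityTransformed} at the exponent where the obstacle problem lives (namely $\gamma=1$, for which $\beta=2$), which immediately splits the problem into two cases: either $u$ coincides with the flat cone from \eqref{EqnFlatConeTransformed}, or the contact set $\{u=0\}$ has Lebesgue measure zero. Setting $\gamma=1$ in \eqref{EqnFlatConeTransformed} gives $u(x)=\tfrac12[(x\cdot e)_+]^2\in\mathcal{HS}$, so the first case is already done, and the remainder of the argument is devoted to showing $u\in\mathcal{P}$ under the second alternative.

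Assume therefore $|\{u=0\}|=0$. By Proposition \ref{PropTransformed}, $u$ is $2$-homogeneous and lies in $C^{1,1}_{loc}(\R^d)$, and with $\beta=2$ the equation \eqref{EqnTransformedEquation} degenerates to $\Delta u=\chi_{\{u>0\}}$, so $\Delta u=1$ pointwise almost everywhere on $\R^d$. The $C^{1,1}$ regularity places $u$ in $W^{2,\infty}_{loc}$, so the distributional Hessian coincides with the pointwise a.e.\ Hessian; hence $\Delta u=1$ holds in the sense of distributions on all of $\R^d$. This is the only genuinely subtle step of the argument, and it is where the vanishing measure of the contact set is essential, since without it the right-hand side $\chi_{\{u>0\}}$ could not be upgraded to a smooth distribution.

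Next, fix an arbitrary unit vector $e$ and differentiate this identity twice along $e$ in the distributional sense to obtain $\Delta(\partial_{ee}u)=0$ on $\R^d$. Elliptic regularity upgrades $\partial_{ee}u$ to a classical harmonic function, the $C^{1,1}$ bound makes it globally bounded, and Liouville's theorem then forces $\partial_{ee}u$ to be constant. Varying $e$, every second partial derivative of $u$ is a constant, so $u$ is a polynomial of degree at most two, and its $2$-homogeneity annihilates the constant and linear parts, leaving $u(x)=\tfrac12\, x\cdot A x$ for some symmetric matrix $A$. Non-negativity of $u$ forces $A\ge 0$, and $\mathrm{trace}(A)=\Delta u=1$ yields the trace condition, so $u\in\mathcal{P}$. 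Combined with the first case, this gives $u\in\mathcal{HS}\cup\mathcal{P}$. The main obstacle is the distributional upgrade of $\Delta u=1$ across the contact set; once that is in place, the proof collapses to a Liouville-plus-homogeneity identification of the quadratic form.
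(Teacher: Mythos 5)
Your argument is correct, and it is essentially the standard classification of $2$-homogeneous global solutions to the obstacle problem from \cite{C, PSU}; the paper itself gives no proof beyond the citation, so there is nothing to diverge from. A few small remarks worth recording.

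First, the statement as printed contains a typo: $\mathcal{K}(2)$ is not defined (Definition \ref{DefCones} only covers $\gamma\in[1/2,3/2]$), and the intended class is $\mathcal{K}(1)$, which is indeed the set of $2$-homogeneous minimizing cones of the obstacle problem (the transformation \eqref{EqnTransformedSolution} is the identity at $\gamma=1$). You silently and correctly read the statement this way; that is what is used in the proof of Theorem \ref{ThmMain}.

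Second, your reduction via Proposition \ref{PropConvexityTransformed} at $\gamma=1$ is exactly the right shortcut: it packages the convexity input from \cite{B, C, PSU, WY} into the dichotomy you need, so you do not have to reprove convexity of global solutions. The remaining steps are all sound: since $u\in C^{1,1}_{loc}\subset W^{2,\infty}_{loc}$, the distributional Laplacian equals the a.e.\ pointwise one, so $|\{u=0\}|=0$ does upgrade $\Delta u=\chi_{\{u>0\}}$ to $\Delta u=1$ on all of $\R^d$ in the distributional sense; then $\partial_{ee}u$ is a bounded (by $0$-homogeneity and the $C^{1,1}$ bound) distributional harmonic function, Weyl's lemma makes it classical, Liouville makes it constant, and polarization over directions $e$ gives that $D^2u$ is a constant matrix. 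Homogeneity, non-negativity, and $\Delta u=1$ then pin down $u=\frac12 x\cdot Ax$ with $A\ge0$, $\mathrm{trace}(A)=1$. One implicit convention you are using (consistently with the paper) is that cones in $\KGa$ are non-trivial, so the dichotomy of Proposition \ref{PropConvexityTransformed} applies; $u\equiv 0$ would falsify that dichotomy and is excluded. Overall this is a clean, correct reconstruction of the classical argument the authors are citing.
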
 

Proposition \ref{PropTransformed} provides enough compactness for the following:
\begin{prop}
\label{PropCompactness}
Given a sequence $u_n\in\mathcal{K}(\gamma_n)$ (see Definition \ref{DefCones}) with $\gamma_n\to1,$ we have, up to a subsequence, 
$$
u_n\to u_\infty \text{ locally uniformly in }\R^d
$$
for some $u_\infty\in\mathcal{K}(1)$,
\end{prop}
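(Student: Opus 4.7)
The plan is a direct compactness argument using the uniform $C^{1,1}$ bounds of Proposition \ref{PropTransformed} together with the fact that the nonlinearity in \eqref{EqnTransformedEquation} degenerates as $\gamma_n \to 1$.

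By Proposition \ref{PropTransformed}, each $u_n$ is $2$-homogeneous, non-negative, and $C^{1,1}_{loc}(\R^d)$, with uniform bounds $|\nabla u_n|^2/u_n + |D^2 u_n| \leq C_d$ on $\{u_n > 0\}$. Arzelà–Ascoli then yields a subsequence converging locally uniformly in $\R^d$, and in $C^{1,\alpha}_{loc}$ on open subsets of $\{u_\infty > 0\}$, to a non-negative, $2$-homogeneous limit $u_\infty \in C^{1,1}_{loc}$.

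To pass to the limit in \eqref{EqnTransformedEquation}, I use that $\beta_n \to 2$ by \eqref{EqnBeta}. On any compactly contained subset of $\{u_\infty > 0\}$, uniform convergence gives $u_n > \delta > 0$ eventually; combined with $|\nabla u_n|^2/u_n \leq C_d$, this drives the correction $\tfrac{\beta_n - 2}{2}\,|\nabla u_n|^2/u_n$ uniformly to zero, so $\Delta u_\infty = 1$ there. On the interior of $\{u_\infty = 0\}$, $u_\infty \equiv 0$ forces $\Delta u_\infty = 0$. Hence $u_\infty$ is a non-negative $C^{1,1}_{loc}$, $2$-homogeneous solution of the classical obstacle PDE $\Delta u_\infty = \chi_{\{u_\infty > 0\}}$ in $\R^d$. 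To see that $u_\infty \in \mathcal{K}(1)$, I then pass minimality to the limit: for any non-negative $w$ with $w = u_\infty$ on $\partial B_R$, the competitor $w_n := w + (u_n - u_\infty)$ (truncated to be non-negative if needed) agrees with $u_n$ on $\partial B_R$, and uniform bounds together with $\gamma_n \to 1$ give $\mathcal{E}_{\gamma_n}(w_n; B_R) \to \mathcal{E}_1(w; B_R)$ and $\mathcal{E}_{\gamma_n}(u_n; B_R) \to \mathcal{E}_1(u_\infty; B_R)$ by dominated convergence, so the minimality of each $u_n$ transfers to $u_\infty$.

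The delicate point is handling the nonlinear correction $\tfrac{\beta_n - 2}{2}\,|\nabla u_n|^2/u_n$ near the free boundary of $u_\infty$, where $u_n$ may be arbitrarily small and the quotient $|\nabla u_n|^2/u_n$ is a priori only bounded above. Fortunately, this bound from Proposition \ref{PropTransformed} holds \emph{independently} of how small $u_n$ becomes, so the vanishing prefactor $\beta_n - 2 \to 0$ is enough to eliminate the term everywhere, and no delicate analysis of the free boundary behavior of the $u_n$ is required.
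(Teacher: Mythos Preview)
The paper gives no detailed proof here, only the remark that Proposition~\ref{PropTransformed} ``provides enough compactness''; your compactness step and the passage to the limit in the PDE \eqref{EqnTransformedEquation} are exactly the intended argument, and your closing observation---that the uniform bound $|\nabla u_n|^2/u_n \le C_d$ from Proposition~\ref{PropTransformed} kills the correction term everywhere, not just away from the free boundary---is precisely the point.

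There is, however, a slip in your minimality paragraph. The functions $u_n$ are the \emph{transformed} cones of Definition~\ref{DefCones}; it is the original $v_n$, not $u_n$, that minimizes $\mathcal{E}_{\gamma_n}$, so ``the minimality of each $u_n$'' for $\mathcal{E}_{\gamma_n}$ is not available as written. Two easy repairs: either run the competitor argument with $v_n$ in place of $u_n$ (noting, as in the proof of Theorem~\ref{ThmMain}, that $\|u_n - v_n\|_{L^\infty(B_R)} \to 0$, so $v_n \to u_\infty$ as well and the energies still converge), or simply drop this step. For $\gamma=1$ the functional $\mathcal{E}_1$ is convex on $\{u \ge 0\}$, so any $C^{1,1}$ function satisfying $\Delta u = \chi_{\{u>0\}}$, $u \ge 0$, is automatically the minimizer for its own boundary data; combined with the $2$-homogeneity you already established, this places $u_\infty$ in $\mathcal{K}(1)$ without further work. (For completeness one should also remark that $u_\infty\not\equiv 0$, which follows from the standard non-degeneracy of Alt--Phillips minimizers, uniform as $\gamma\to 1$.)
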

\section{An integral inequality}
\label{SectionInequality}

This section is devoted to the proof of  Lemma \ref{LemMagicLemma}.

The following allows us to perform an induction on the dimension $d$:
\begin{lem}
\label{LemIntegralInDifferentDimensions}
For $d\ge2,$ suppose that $p\in\P$ is invariant in the $e_d$-direction, that is, 
$$
p(x_1,x_2,\dots,x_d)=q(x_1,x_2,\dots,x_{d-1}),
$$
for some $q\in\P$ in $\R^{d-1}$.

Then 
$$
\int_{\partial B_1}\frac{|\nabla p|^2}{p}(p-\frac{1}{2d}|x|^2)d\mathcal{H}^{d-1}=\alpha_d\int_{\partial B_1\cap\{x_d=0\}}\frac{|\nabla q|^2}{q}\cdot(q-\frac{1}{2(d-1)}|(x_1,x_2,\dots x_{d-1})|^2)d\mathcal{H}^{d-2}
$$
for a positive dimensional constant $\alpha_d$.
\end{lem}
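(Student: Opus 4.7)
The plan is to carry out a direct computation in spherical coordinates adapted to the $e_d$-axis, exploiting the 2-homogeneity of $p$ (and hence 1-homogeneity of $\nabla p$). First, I would observe that the hypothesis forces $q$ to be a parabola solution in $\R^{d-1}$: writing $p=\frac12 x\cdot Ax$ with $A\ge 0$ and $\mathrm{trace}(A)=1$, the invariance in $e_d$ forces $Ae_d=0$, so the restriction $B=A|_{\R^{d-1}\times\R^{d-1}}$ satisfies $B\ge 0$ and $\mathrm{trace}(B)=1$, giving $q\in\mathcal{P}$ in $\R^{d-1}$. This also ensures $\{q=0\}\subset\R^{d-1}$ is a proper subspace, so the integrand is well-defined a.e.\ and bounded, since $|\nabla q|^2/q$ is a bounded homogeneous degree-$0$ function (controlled by $2\max\lambda_i$ in diagonal form).

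Next, I would parametrize $\partial B_1\subset\R^d$ by
\[
x=(\sin\theta\,\omega,\cos\theta),\qquad \omega\in S^{d-2}\subset\R^{d-1},\quad \theta\in[0,\pi],
\]
with surface measure $\sin^{d-2}\theta\,d\theta\,d\mathcal{H}^{d-2}(\omega)$. Since $p(x)=q(\sin\theta\,\omega)=\sin^2\theta\,q(\omega)$ by 2-homogeneity, and $|\nabla p(x)|^2=|\nabla q(\sin\theta\,\omega)|^2=\sin^2\theta\,|\nabla q(\omega)|^2$ by 1-homogeneity of $\nabla q$, the integrand on $\partial B_1$ becomes
\[
\frac{|\nabla p|^2}{p}\Bigl(p-\tfrac{1}{2d}|x|^2\Bigr)
=\frac{|\nabla q(\omega)|^2}{q(\omega)}\Bigl(\sin^2\theta\,q(\omega)-\tfrac{1}{2d}\Bigr).
\]

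Then I would perform the $\theta$-integration. Setting $I_k:=\int_0^\pi\sin^k\theta\,d\theta$, the integral splits into
\[
I_d\int_{S^{d-2}}|\nabla q|^2\,d\omega\;-\;\tfrac{1}{2d}I_{d-2}\int_{S^{d-2}}\frac{|\nabla q|^2}{q}\,d\omega.
\]
The Wallis recursion $I_d=\tfrac{d-1}{d}I_{d-2}$ lets me factor $\tfrac{d-1}{d}I_{d-2}$ out, producing
\[
\tfrac{d-1}{d}I_{d-2}\int_{S^{d-2}}\frac{|\nabla q|^2}{q}\Bigl(q-\tfrac{1}{2(d-1)}\Bigr)\,d\omega.
\]
Since $\partial B_1\cap\{x_d=0\}=S^{d-2}$ and $|(x_1,\dots,x_{d-1})|^2=1$ there, this matches the right-hand side with $\alpha_d=\tfrac{d-1}{d}I_{d-2}>0$.

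There is no real obstacle: the argument is a bookkeeping exercise in spherical coordinates, and the only slightly non-obvious ingredient is the Wallis-type identity $I_d=\tfrac{d-1}{d}I_{d-2}$, which provides the precise matching between the dimensional constants $\tfrac{1}{2d}$ and $\tfrac{1}{2(d-1)}$ appearing on the two sides.
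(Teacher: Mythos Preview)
Your proposal is correct and follows essentially the same route as the paper: both arguments slice $\partial B_1$ by the $e_d$-coordinate, use the homogeneity of $p$ and $|\nabla p|^2/p$ to reduce each slice to an integral over $S^{d-2}$, and then invoke the Wallis recursion to match the constants $\tfrac{1}{2d}$ and $\tfrac{1}{2(d-1)}$. The only cosmetic difference is that the paper sets $x_d=\sin\theta$ with $\theta\in[-\pi/2,\pi/2]$ (so $\cos^k$ appears), whereas you set $x_d=\cos\theta$ with $\theta\in[0,\pi]$ (so $\sin^k$ appears); the resulting $\alpha_d$ is of course the same.
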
 
Recall the space of parabola solutions $\P$ from Definition \ref{DefObPCones}.
\begin{proof}
Introduce a variable $\theta\in[-\pi/2,\pi/2]$ such that 
$$
x_d=\sin(\theta) \text{ on }\partial B_1,
$$
then we have
\begin{equation}
\label{EqnIntegralInDifferentDimensions}
\int_{\partial B_1}\frac{|\nabla p|^2}{p}(p-\frac{1}{2d}|x|^2)=\int_{-\pi/2}^{\pi/2}\int_{\partial B_1\cap\{\theta=s\}}\frac{|\nabla p|^2}{p}(p-\frac{1}{2d})dH^{d-2}ds
\end{equation}

Along $\partial B_1\cap\{\theta=s\}$, since $p$ is independent of $x_d$ and is $2$-homogeneous, we have
$$
\frac{|\nabla p|^2}{p}|_{\theta=s}=\frac{|\nabla q|^2}{q}|_{\theta=0},\hem \text{ and }\hem
p|_{\theta=s}=q|_{\theta=0}\cdot\cos^2(s).
$$
As a result, we have
$$
\int_{\partial B_1\cap\{\theta=s\}}\frac{|\nabla p|^2}{p}(p-\frac{1}{2d})dH^{d-2}=\int_{\partial B_1\cap\{\theta=0\}}\frac{|\nabla q|^2}{q}\cdot(q\cos^2(s)-\frac{1}{2d})\cos^{d-2}(s)dH^{d-2}.
$$

Putting this into \eqref{EqnIntegralInDifferentDimensions}, we get
\begin{align*}
\int_{\partial B_1}\frac{|\nabla p|^2}{p}(p-\frac{1}{2d}|x|^2)&=\int_{-\pi/2}^{\pi/2}\cos^{d}(s)ds\cdot\int_{\partial B_1\cap\{\theta=0\}}\frac{|\nabla q|^2}{q}q\\
&-\int_{-\pi/2}^{\pi/2}\cos^{d-2}(s)ds\cdot\int_{\partial B_1\cap\{\theta=0\}}\frac{|\nabla q|^2}{q}\frac{1}{2d}.
\end{align*}
With the recursive formula $\int_{-\pi/2}^{\pi/2}\cos^{d}(s)ds=\frac{d-1}{d}\int_{-\pi/2}^{\pi/2}\cos^{d-2}(s)ds$, we conclude
$$
\int_{\partial B_1}\frac{|\nabla p|^2}{p}(p-\frac{1}{2d}|x|^2)=\frac{d-1}{d}\int_{-\pi/2}^{\pi/2}\cos^{d-2}(s)ds\cdot\int_{\partial B_1\cap\{x_d=0\}}\frac{|\nabla q|^2}{q}\cdot(1-\frac{1}{2(d-1)}).
$$
\end{proof} 

Recall the radial solution $P_d$ from \eqref{EqnkCone}
\begin{equation}\label{EqnPD}
P_d(x)=\frac{1}{2d}|x|^2.
\end{equation}
To connect  $p\in\P$ (see Definition \ref{DefObPCones}) to a solution that is degenerate along an axis, define
\begin{equation}
\label{EqnInterpolation}
p_t(x):=tp(x)+(1-t)P_d(x)\hem \text{ for }t\ge0.
\end{equation} 
As long as $p_t\ge0$, we have 
$
p_t\in\P.
$
In particular, this holds for all $t\in[0,1].$

Correspondingly, we define
\begin{equation}
\label{EqnQInterpolation}
Q(t):=\int_{\partial B_1}\frac{|\nabla p_t|^2}{p_t}(p_t-\frac{1}{2d}|x|^2) \hem\text{ as long as }p_t\ge0. 
\end{equation} 
Following Proposition \ref{PropTransformed}, the integrand is bounded by a dimensional constant.

To prove Lemma \ref{LemMagicLemma} is to show that $Q(1)\ge0$, and to characterize the equality case.

We give an alternative expression of this quantity $Q$:
\begin{lem}
\label{LemMeaningfulQ}
For $p\in\P$, let $p_t$ and $Q$ be as in \eqref{EqnInterpolation} and \eqref{EqnQInterpolation}. Then we have
$$
Q(t)=t^2\cdot[\int_{\partial B_1}|\nabla_\tau p|^2+4\int_{\partial B_1}(p-P_d)^2-\frac{1}{2d}\int_{\partial B_1}\frac{|\nabla_\tau p|^2}{p_t}]
$$
as long as $p_t\ge0.$ 

Here $\nabla_\tau$ denotes the gradient along directions tangential to $\PB$.
\end{lem}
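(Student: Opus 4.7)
The plan is direct computation, exploiting three features: the identity $P_d = \frac{1}{2d}|x|^2$, the $2$-homogeneity of parabola solutions, and the trace condition on $p\in\P$ which will force a crucial integral to vanish.

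First I would observe that since $P_d(x) = \frac{1}{2d}|x|^2$ (the expression \eqref{EqnPD}), the factor $p_t - \frac{1}{2d}|x|^2$ simplifies to $t(p-P_d)$. So
\begin{equation*}
Q(t) = t\int_{\PB}\frac{|\nabla p_t|^2}{p_t}(p-P_d).
\end{equation*}
Next, I would decompose $|\nabla p_t|^2$ on $\PB$ into its radial and tangential parts. Since both $p$ and $P_d$ are $2$-homogeneous, so is $p_t$, giving $x\cdot\nabla p_t = 2p_t$ on $\PB$; hence the radial component of $\nabla p_t$ equals $2p_t$. For the tangential part, $\nabla P_d = x/d$ is purely radial, so $\nabla_\tau p_t = t\nabla_\tau p$. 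Therefore
\begin{equation*}
|\nabla p_t|^2 = 4p_t^2 + t^2|\nabla_\tau p|^2 \quad \text{on }\PB.
\end{equation*}

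Substituting this into the expression for $Q(t)$ splits it into two pieces:
\begin{equation*}
Q(t) = 4t\int_{\PB}p_t(p-P_d) + t^3\int_{\PB}\frac{|\nabla_\tau p|^2}{p_t}(p-P_d).
\end{equation*}
For the first piece, I would write $p_t = P_d + t(p-P_d)$ and use that $P_d = \frac{1}{2d}$ is constant on $\PB$, so
\begin{equation*}
4t\int_{\PB}p_t(p-P_d) = \frac{2t}{d}\int_{\PB}(p-P_d) + 4t^2\int_{\PB}(p-P_d)^2.
\end{equation*}
The key cancellation is that $\int_{\PB}(p-P_d) = 0$: writing $p = \tfrac12 x\cdot Ax$ and using $\int_{\PB}x_ix_j = \frac{|\PB|}{d}\delta_{ij}$ shows $\int_{\PB}p = \frac{\mathrm{trace}(A)}{2d}|\PB|$, which equals $\int_{\PB}P_d = \frac{|\PB|}{2d}$ exactly because $\mathrm{trace}(A)=1$.

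For the second piece, rewriting $t(p-P_d) = p_t - P_d$ and using the constancy of $P_d$ on $\PB$ gives
\begin{equation*}
t^3\int_{\PB}\frac{|\nabla_\tau p|^2}{p_t}(p-P_d) = t^2\int_{\PB}|\nabla_\tau p|^2 - \frac{t^2}{2d}\int_{\PB}\frac{|\nabla_\tau p|^2}{p_t}.
\end{equation*}
Adding the two pieces produces the claimed formula. There is no serious obstacle here; the only non-obvious input is the vanishing $\int_{\PB}(p-P_d) = 0$, which comes for free from $\mathrm{trace}(A)=1$. The whole identity is really a bookkeeping consequence of homogeneity plus the affine structure of the interpolation \eqref{EqnInterpolation}.
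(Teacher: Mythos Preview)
Your proof is correct and follows essentially the same route as the paper: split $|\nabla p_t|^2$ into radial and tangential parts using $2$-homogeneity, then use $\int_{\PB}(p-P_d)=0$ to kill the cross term. The only cosmetic differences are that you factor out $t$ from $p_t-P_d=t(p-P_d)$ at the outset (the paper instead first writes $Q(t)=\int_{\PB}|\nabla p_t|^2-\frac{1}{2d}\int_{\PB}\frac{|\nabla p_t|^2}{p_t}$), and you justify the vanishing integral via the trace condition directly, while the paper phrases it as ``$p-P_d$ is a $2$-homogeneous harmonic function.''
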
 
Recall the class $\P$ from Definition \ref{DefObPCones}. Recall the radial solution $P_d$ from \eqref{EqnPD}.
\begin{proof}
With \eqref{EqnQInterpolation}, we have
\begin{equation}
\label{EqnFirstExpansionOfQ}
Q(t)=\int_{\PB}|\nabla p_t|^2-\frac{1}{2d}\int_{\PB}\frac{|\nabla p_t|^2}{p_t}.
\end{equation}

Note that $|\nabla p_t|^2=|\frac{\partial}{\partial r}p_t|^2+|\nabla_\tau p_t|^2$. 
With the homogeneity of $p_t$, we see that $\frac{\partial}{\partial r}p_t=2p_t$ on $\PB$. With \eqref{EqnInterpolation}, we see that $|\nabla_\tau p_t|=t|\nabla_\tau p|$. As a result, we have
\begin{align*}
|\nabla p_t|^2&=4p_t^2+t^2|\nabla_\tau p|^2\\
&=4p_tP_d+4tp_t(p-P_d)+t^2|\nabla_\tau p|^2\\
&=\frac{4}{2d}p_t+4tp_t(p-P_d)+t^2|\nabla_\tau p|^2 \hem\text{ on }\PB,
\end{align*}
and
$$
\frac{|\nabla p_t|^2}{p_t}=4p_t+t^2\frac{|\nabla_\tau p|^2}{p_t} \hem\text{ on }\PB.
$$
Putting these into \eqref{EqnFirstExpansionOfQ}, we get
\begin{equation}
\label{EqnSecondExpansionOfQ}
Q(t)=t^2\cdot[\int_{\PB}|\nabla_\tau p|^2-\frac{1}{2d}\int_{\PB}\frac{|\nabla_\tau p|^2}{p_t}]+4t\int_{\PB}p_t(p-P_d).
\end{equation}

\vem

From \eqref{EqnInterpolation}, we see that $p_t=tp+\frac{1}{2d}(1-t)$ on $\PB$. Since $(p-P_d)$ is a $2$-homogeneous harmonic function, we have
$
\int_{\PB}(p-P_d)=0.
$
Thus the last integral in \eqref{EqnSecondExpansionOfQ} satisfies
$$
\int_{\PB}p_t(p-P_d)=t\int_{\PB}p(p-P_d)=t\int_{\PB}(p-P_d)^2.
$$
Combining this with \eqref{EqnSecondExpansionOfQ}, we get the desired expansion of $Q$. 
\end{proof}

We now give the proof of Lemma \ref{LemMagicLemma}:
\begin{proof}[Proof of Lemma \ref{LemMagicLemma}]
We argue by an induction on the dimension $d$. 

In $\R^1$, there is only one parabola solution, namely, $x^2/2$. The conclusion follows. 

Suppose that we have established Lemma \ref{LemMagicLemma} in dimensions $1,2,\dots, d-1$, below we prove the result in $\R^d$. 

\vem

For $p_t$ from \eqref{EqnInterpolation}, define 
\begin{equation}
\label{EqnBT}
\bar{t}:=\sup\{t:\hem p_t\ge0 \text{ on }\PB\}.
\end{equation}  
Since  $p\in\P$ (see Definition \ref{DefObPCones}), we have $$\bar{t}\ge1.$$ 

Moreover, by definition of $\bar{t}$, we have
$$
p_{\bar t}\in\P \hem \text{ with }\hem p_{\bar t}(e)=0 
$$
at some $e\in\PB$. This implies that $p_{\bar t}$ is invariant along the $e$-direction. Our induction hypothesis and   Lemma \ref{LemIntegralInDifferentDimensions} give, for the quantity $Q$ in \eqref{EqnQInterpolation},
\begin{equation}
\label{EqnInductionHypothesis}
Q(\bar t)\ge 0, \hem\text{ and equality holds only when }p_{\bar t}\in\SP.
\end{equation} 
Recall the space of symmetric cones $\SP$ from Definition \ref{DefObPCones}. 

\vem
Based on the value of $\bar t$ in \eqref{EqnBT}, we consider two cases.

If $\bar{t}=1$, then $p=p_{\bar t}$. The desired conclusion follows by \eqref{EqnInductionHypothesis}.  

It remains to consider the case when $\bar{t}>1$.

In this case, define
\begin{equation}
\label{Eqnq}
q(t):=Q(t)/t^2=\int_{\partial B_1}|\nabla_\tau p|^2+4\int_{\partial B_1}(p-P_d)^2-\frac{1}{2d}\int_{\partial B_1}\frac{|\nabla_\tau p|^2}{p_t},
\end{equation}
where we used Lemma \ref{LemMeaningfulQ}. 

With \eqref{EqnInterpolation}, we have $p_0=\frac{1}{2d}$ on $\PB$. This implies
\begin{equation}
\label{Eqnq0}
q(0)=4\int_{\partial B_1}(p-P_d)^2\ge0.
\end{equation}
At $\bar t>1$, we have
\begin{equation}
\label{Eqnqbt}
q(\bar t)=Q(\bar t)/\bar{t}^2\ge0
\end{equation}
by \eqref{EqnInductionHypothesis}.

Moreover, direct computation with \eqref{EqnInterpolation} and \eqref{Eqnq} gives, for $t\in(0,\bar t)$,
$$
q''(t)=-\frac{1}{d}\int_{\PB}\frac{|\nabla_\tau p|^2}{p_t^3}(p-P_d)^2\le 0.
$$
Together with \eqref{Eqnq0} and \eqref{Eqnqbt}, this implies 
$$q(1)\ge0.$$
With \eqref{EqnQInterpolation} and \eqref{Eqnq}, this gives the desired inequality 
\begin{equation}
\label{EqnDesiredInequality}
\int_{\PB}\frac{|\nabla p|^2}{p}(p-\frac{1}{2d}|x|^2)=Q(1)=q(1)\ge0.
\end{equation}

The equality case in \eqref{EqnDesiredInequality} forces the equality in \eqref{Eqnq0}, this implies
$$
p=P_d=\frac{1}{2d}|x|^2.
$$
\end{proof}

\section{Concentration around cones with symmetry}
\label{SectionConesWithSymmetry}

In this section, we give the proof of our main result, Theorem \ref{ThmMain}. 

The first ingredient is the stability of the quantity in Lemma \ref{LemMagicLemma}:
\begin{lem}
\label{LemMagicLemmaForV}
Suppose $u\in\KGa$ with $\gamma\in[\frac12,1)\cup(1,\frac32]$ satisfies
\begin{equation}
\label{EqnVMinusP}
\|u-p\|_{\mathcal{L}^\infty(B_1)}<\frac{1}{4d}
\end{equation}
for some $p\in\P$.

Then 
$$
\int_{\partial B_1}\frac{|\nabla u|^2}{u}(p-\frac{1}{2d}|x|^2)\le  C_d\|u-p\|_{\mathcal{L}^\infty(B_1)}
$$
for a dimensional constant $C_d$.
\end{lem}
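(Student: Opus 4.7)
The plan is to derive a clean identity expressing the target integral as a multiple of $\int_{\PB \cap \ConS} \phi$ (with $\phi := p - \frac{1}{2d}|x|^2$), and then exploit a sign.

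First I would substitute $w := 2\sqrt{u}$ into the equation from Proposition \ref{PropTransformed}; a direct computation using $u = w^2/4$ gives
\begin{equation*}
w\Delta w + (\beta-1)|\nabla w|^2 = 2 \quad \text{on } \PosS.
\end{equation*}
Since $u$ is $2$-homogeneous, $w$ is $1$-homogeneous. Write $\bar w := w|_{\PB}$, $\bar\phi := \phi|_{\PB}$, and let $\nabla_S,\Delta_S$ denote the spherical gradient and Laplacian. Evaluating at $r=1$ via the polar decomposition $\Delta = \partial_{rr}+\tfrac{d-1}{r}\partial_r+\tfrac{1}{r^2}\Delta_S$ yields the spherical PDE
\begin{equation*}
\bar w\,\Delta_S\bar w + (d+\beta-2)\bar w^2 + (\beta-1)|\nabla_S\bar w|^2 = 2
\end{equation*}
on $\Omega := \PB \cap \PosS$.

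Next I would rewrite the target using $\Diri/u = |\nabla w|^2 = 4\bar u + |\nabla_S\bar w|^2$ on $\PB$ (from $2$-homogeneity of $u$):
\begin{equation*}
\int_{\PB}\frac{\Diri}{u}\phi \;=\; 4\int_\Omega \bar u\,\bar\phi + \int_\Omega |\nabla_S\bar w|^2\bar\phi.
\end{equation*}
The heart of the argument is computing $\int_\Omega |\nabla_S\bar w|^2\bar\phi$ by integration by parts on $\Omega$. Using $\Delta_S\bar\phi = -2d\,\bar\phi$ (since $\phi$ is $2$-homogeneous and harmonic) together with the crucial observation that $\bar w\,\nabla_S\bar w = 2\nabla_S\bar u$ is Lipschitz with $\bar u=\bar w=0$ on $\partial\Omega$ (so all boundary terms vanish), two rounds of IBP give
\begin{equation*}
\int_\Omega |\nabla_S\bar w|^2\bar\phi \;=\; -d\int_\Omega \bar w^2\bar\phi \;-\; \int_\Omega \bar w\bar\phi\,\Delta_S\bar w.
\end{equation*}
Substituting the spherical PDE for $\bar w\,\Delta_S\bar w$ and solving the resulting linear relation (possible since $\beta\neq 2$ when $\gamma\neq 1$), then combining with the previous display and using $\bar w^2 = 4\bar u$, yields the key identity
\begin{equation*}
\int_{\PB}\frac{\Diri}{u}\phi \;=\; \frac{2}{\beta-2}\int_\Omega \bar\phi \;=\; -\frac{2}{\beta-2}\int_{\PB\cap\ConS}\bar\phi,
\end{equation*}
where the last step uses $\int_{\PB}\phi = 0$ (as $\phi$ is a degree-$2$ spherical harmonic).

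To close I would do a case analysis on the sign of $\beta-2$. If $\gamma\in(1,3/2]$ (so $\beta>2$), the hypothesis $\|u-p\|_{\mathcal{L}^\infty(B_1)}<1/(4d)$ excludes $u$ from being a flat cone: any flat cone $\tfrac{2-\gamma}{2\gamma}(x\cdot e)_+^2$ has $\mathcal{L}^\infty(B_1)$-distance at least $1/(2d)$ from every $p\in\P$ (evaluate at $\pm v$ where $v$ is a unit eigenvector of $A$ of eigenvalue $\ge 1/d$). Proposition \ref{PropConvexityTransformed} then forces $|\ConS|=0$, and since $\ConS$ is a cone this gives $H^{d-1}(\PB\cap\ConS)=0$, so the LHS is exactly $0$. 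If $\gamma\in[1/2,1)$ (so $\beta<2$ and $-2/(\beta-2)>0$), then on $\PB\cap\ConS$ we have $\bar p \le \|u-p\|_{\mathcal{L}^\infty}<1/(4d)$, hence $\bar\phi = \bar p - 1/(2d) < -1/(4d) < 0$, so $\int_{\PB\cap\ConS}\bar\phi \le 0$ and the product is $\le 0$. In either case the LHS is $\le 0 \le C_d\|u-p\|_{\mathcal{L}^\infty}$, with room to spare.

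The main obstacle I anticipate is justifying the integration by parts rigorously from only the $C^{1,1}$ regularity of $u$: $\bar w = 2\sqrt{\bar u}$ is merely Lipschitz near $\partial\Omega$ and $\Delta_S\bar w$ is unbounded there. The resolution is that every integrand is controlled: $\bar w\,\Delta_S\bar w$ is bounded by the PDE itself, $\bar w\,\nabla_S\bar w = 2\nabla_S\bar u$ is Lipschitz by Proposition \ref{PropTransformed}, and each boundary term carries a factor vanishing on $\partial\Omega$.
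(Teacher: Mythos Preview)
Your argument is correct and reaches exactly the same key identity as the paper, namely
\[
\int_{\PB}\frac{\Diri}{u}\Big(p-\frac{1}{2d}|x|^2\Big)=\frac{2}{2-\beta}\int_{\PB\cap\ConS}\Big(p-\frac{1}{2d}|x|^2\Big),
\]
but you derive it by a substantially different and longer route. The paper obtains this in three lines: it applies Green's identity in $B_1$ to the pair $\phi=p-\frac{1}{2d}|x|^2$ and $u-p$ (both $2$-homogeneous, so the boundary terms cancel), then reads off $\Delta(u-p)=-\chi_{\ConS}+\frac{2-\beta}{2}\frac{\Diri}{u}$ directly from Proposition~\ref{PropTransformed}. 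Since $u-p\in C^{1,1}$ and $\Delta(u-p)\in L^\infty$, no delicate justification is needed. Your path via the square-root substitution $w=2\sqrt{u}$ and spherical integration by parts works but carries the regularity burden you flag; it is cleanest to phrase your IBP as the divergence theorem on the closed manifold $\PB$ applied to the globally Lipschitz fields $\bar\phi\nabla_S\bar u$ and $\bar w^{2}\nabla_S\bar\phi$, using that $D^2\bar u=0$ a.e.\ on $\{\bar u=0\}$, rather than as IBP on $\Omega$ with boundary terms. On the other hand, your endgame for $\gamma<1$ is neater than the paper's: you note $\bar\phi<-\frac{1}{4d}$ on $\PB\cap\ConS$ and conclude the left side is $\le 0$ outright, whereas the paper first bounds $H^{d-1}(\PB\cap\ConS)\le C_d|2-\beta|$ and only gets $\le C_d\|u-p\|_{L^\infty}$.
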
 
Recall the spaces of cones, $\KGa$ and $\P$, from Definition \ref{DefCones} and Definition \ref{DefObPCones}.

\begin{proof}
\textit{Step 1: Consequence of Green's identity.}

With the $2$-homogeneity of  $u$, $p$ and $\frac{1}{2d}|x|^2$, and the harmonicity of $(p-\frac{1}{2d}|x|^2)$, we have
\begin{align*}
\int_{B_1}(p-\frac{1}{2d}|x|^2)\Delta(u-p)&=\int_{\partial B_1}(p-\frac{1}{2d}|x|^2)\nabla(u-p)\cdot\nu-(u-p)\nabla(p-\frac{1}{2d}|x|^2)\cdot\nu\\
&=2\int_{\partial B_1}(p-\frac{1}{2d}|x|^2)(u-p)-(u-p)(p-\frac{1}{2d}|x|^2)\\
&=0.
\end{align*}
With Proposition \ref{PropTransformed} and the definition of $\P$, the difference $(u-p)$ solves
\begin{equation}
\label{EqnLaplacianVMinusP}
\Delta(u-p)=-\chi_{\{u=0\}}+\frac{2-\beta}{2}\frac{|\nabla u|^2}{u} \text{ in }\R^d.
\end{equation}
Combining these two equations and using the homogeneity of the functions, we have
\begin{equation}
\label{EqnLastOfGreen}
\frac{2-\beta}{2}\int_{\partial B_1}\frac{|\nabla u|^2}{u}(p-\frac{1}{2d}|x|^2)=\int_{\partial B_1}(p-\frac{1}{2d}|x|^2)\chi_{\{u=0\}}.
\end{equation}

\vem

\textit{Step 2: The case when $\gamma>1$.}

In this case, Proposition \ref{PropConvexityTransformed} implies that either $u=\frac{2-\gamma}{2\gamma}[(x\cdot e)_+]^2$  or $\{u=0\}$ has zero measure. The former possibility is ruled out by \eqref{EqnVMinusP}. Combining this with \eqref{EqnLastOfGreen} and  $\beta\neq2$ (see \eqref{EqnBeta}), we get
$$
\int_{\partial B_1}\frac{|\nabla u|^2}{u}(p-\frac{1}{2d}|x|^2)=0,
$$
completing the proof in this case. 

\vem

\textit{Step 3: The case when $\gamma<1$.}

With \eqref{EqnVMinusP}, we have $p<\frac{1}{4d}$ on $\{u=0\}\cap B_1$. With the bound on $\Diri/u$ in Proposition \ref{PropTransformed}, we use \eqref{EqnLastOfGreen} to get
$$
\int_{\partial B_1}\chi_{\{u=0\}}\le C_d|2-\beta|.
$$
Together with \eqref{EqnLastOfGreen}, we have
\begin{align*}
\frac{2-\beta}{2}\int_{\partial B_1}\frac{|\nabla u|^2}{u}(p-\frac{1}{2d}|x|^2)&\le\int_{\partial B_1}p\chi_{\{u=0\}}\\
&\le\|u-p\|_{\mathcal{L}^\infty(B_1)}\int_{\partial B_1}\chi_{\{u=0\}}\\
&\le C_d |2-\beta|\cdot\|u-p\|_{\mathcal{L}^\infty(B_1)}.
\end{align*}

From \eqref{EqnBeta}, we see that $2-\beta>0$ in this case. Thus
$$
\int_{\partial B_1}\frac{|\nabla u|^2}{u}(p-\frac{1}{2d}|x|^2)\le C_d\|u-p\|_{\mathcal{L}^\infty(B_1)}.
$$
\end{proof} 

As a consequence, we have
\begin{cor}
\label{CorConcentration}
For $\gamma_n\neq 1$ with $\gamma_n\to 1$, suppose that a sequence $u_n\in\K(\gamma_n)$ satisfies
$$
u_n\to p \text{ locally uniformly in }\R^d \text{ for some }p\in\P.
$$

Then $p\in\SP$.
\end{cor}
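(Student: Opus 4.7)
The plan is to combine the stability estimate of Lemma \ref{LemMagicLemmaForV} with the rigidity of Lemma \ref{LemMagicLemma} via a passage to the limit, thereby forcing $p$ into the symmetric class $\SP$.

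For $n$ large the locally uniform convergence gives $\|u_n - p\|_{\mathcal{L}^\infty(B_1)} < \tfrac{1}{4d}$ and $\gamma_n \in [\tfrac12, 1) \cup (1, \tfrac32]$, so Lemma \ref{LemMagicLemmaForV} applies and yields
$$
\int_{\PB} \frac{|\nabla u_n|^2}{u_n}\Bigl(p - \tfrac{1}{2d}|x|^2\Bigr) \le C_d \|u_n - p\|_{\mathcal{L}^\infty(B_1)} \longrightarrow 0.
$$
The next step is to pass to the limit and identify the left-hand side with $\int_{\PB} \frac{|\nabla p|^2}{p}(p - \frac{1}{2d}|x|^2)$. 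The universal bound $|\nabla u_n|^2/u_n \le C_d$ from Proposition \ref{PropTransformed} furnishes a dominating function, and on the open set $\{p>0\}$ the function $u_n$ is eventually bounded below by a positive constant on compacta; standard elliptic regularity for \eqref{EqnTransformedEquation} then upgrades uniform convergence of $u_n \to p$ to $C^{2,\alpha}_{\mathrm{loc}}$ convergence, giving pointwise convergence of $|\nabla u_n|^2/u_n$ to $|\nabla p|^2/p$ on $\{p>0\}$. The complement contributes nothing: writing $p = \tfrac{1}{2}\,x \cdot Ax$ with $\mathrm{trace}(A) = 1$, the zero set $\ker A$ has dimension at most $d-1$, so $\{p = 0\} \cap \PB$ carries zero $(d-1)$-dimensional Hausdorff measure. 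Dominated convergence over $\PB$ delivers the desired limit.

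Combining the two displays gives $\int_{\PB} \frac{|\nabla p|^2}{p}(p - \frac{1}{2d}|x|^2) \le 0$, while Lemma \ref{LemMagicLemma} provides the reverse inequality. Equality therefore holds, and the characterization of the equality case in Lemma \ref{LemMagicLemma} forces $p = P_k$ up to rotation for some $k \in \{1, \dots, d\}$, i.e., $p \in \SP$. The delicate part of the plan is the passage to the limit, since the integrand is sign-changing; it works only because the bound on $|\nabla u_n|^2/u_n$ is uniform across the free boundary and because parabola solutions, being nondegenerate quadratics with unit trace, have genuinely lower-dimensional zero sets on $\PB$.
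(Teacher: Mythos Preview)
Your proof is correct and follows essentially the same route as the paper: apply Lemma~\ref{LemMagicLemmaForV}, pass to the limit in the integral using the uniform bound on $|\nabla u_n|^2/u_n$ and the fact that $\{p=0\}\cap\PB$ is a null set, then invoke the equality case of Lemma~\ref{LemMagicLemma}. The only cosmetic difference is that the paper carries out the passage to the limit via an explicit truncation on $\{|x_1|\ge\eta\}$ (after rotating so that $p\ge\frac{1}{2d}x_1^2$) and sends $\eta\to0$, while you package the same argument as a direct application of dominated convergence.
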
 
Recall the spaces of cones, $\KGa$, $\P$ and $\SP$, from Definition \ref{DefCones} and Definition \ref{DefObPCones}.
\begin{proof}
With Proposition \ref{PropTransformed}, we see that $u_n\to p$ locally  uniformly in $C^1(\R^d)$.

With $p\in\P$, we see that, up to a rotation, we have
$
p\ge\frac{1}{2d}x_1^2.
$
Given $\eta>0$, for large $n$, we have $u_n(x)>\frac{1}{4d}\eta^2$ when $|x_1|\ge\eta$. 

As a result,  we have
$$
\int_{\PB\cap\{|x_1|\ge\eta\}}\frac{|\nabla p|^2}{p}(p-\frac{1}{2d}|x|^2)=\lim\int_{\PB\cap\{|x_1|\ge\eta\}}\frac{|\nabla u_n|^2}{u_n}(p-\frac{1}{2d}|x|^2).
$$

Meanwhile, Proposition \ref{PropTransformed} implies that the integrals on $\PB\cap\{|x_1|\le\eta\}$ is controlled by $C_d\eta$ for some dimensional constant $C_d$.  Thus we have
$$
\int_{\PB}\frac{|\nabla p|^2}{p}(p-\frac{1}{2d}|x|^2)\le\limsup\int_{\PB}\frac{|\nabla u_n|^2}{u_n}(p-\frac{1}{2d}|x|^2)+C_d\eta\le C_d\eta.
$$
The last inequality follows from Lemma \ref{LemMagicLemmaForV}.

Sending $\eta\to0$, the conclusion follows from the equality case in Lemma \ref{LemMagicLemma}.
\end{proof} 

With this, we give the proof of our main result:
\begin{proof}[Proof of Theorem \ref{ThmMain}]
Suppose the theorem is false, then for some $\eps>0$, we  find a sequence  $\{v_n\}$, each being a minimizing cone to \eqref{EqnAP} with exponent $\gamma_n$ such that 
$$
\gamma_n\neq 1 \text{ for all }n, \hem\gamma_n\to1,
$$
but 
\begin{equation}
\label{Eqn}
\|v_n-p\|_{L^\infty(B_1)}\ge\eps
\end{equation}
for all $p\in\HS\cup\SP$ (see Definition \ref{DefObPCones}) and all $n\in\N.$

Suppose that $u_n$ is related to $v_n$ by \eqref{EqnTransformedSolution}. It is elementary to see from \eqref{EqnBeta} and \eqref{EqnTransformedSolution} that
$\lim_n\|u_n-v_n\|_{L^\infty(B_1)}=0.$
As a consequence of \eqref{Eqn}, for all large $n$,  we have 
\begin{equation}
\label{EqnOfContradiction}
\|u_n-p\|_{L^\infty(B_1)}\ge\eps/2
\end{equation}
for all $p\in\HS\cup\SP$.

\vem

With Proposition \ref{PropObPCones} and   Proposition \ref{PropCompactness}, we have, up to a subsequence, 
$$
u_n\to u_\infty \hem \text{ locally uniformly in }\R^d
$$
for some $u_\infty\in\HS$ or $u\in\P$ (see Definition \ref{DefObPCones}).  
 The first possibility is ruled out by \eqref{EqnOfContradiction}. Thus $u_\infty\in\P$. 

In this case, Corollary \ref{CorConcentration} implies that $u_\infty\in\SP$, contradicting \eqref{EqnOfContradiction}.
\end{proof}

\section*{Data availability statement}
There is no associated data for this work.


\end{document}